\theoremstyle{plain}
\newtheorem{theorem}{Theorem}
\newtheorem{lemma}{Lemma}
\newtheorem{corollary}{Corollary}
\newtheorem{proposition}{Proposition}
\theoremstyle{definition}
\newtheorem{example}{Example}
\newtheorem{remark}{Remark}
\newcommand{\E}{\mathbb{E}}
\renewcommand{\P}{\mathbb{P}}
\newcommand{\N}{\mathbb{N}}
\newcommand{\R}{\mathbb{R}}
\newcommand{\EEE}{\mathcal{E}}
\newcommand{\FFF}{\mathcal{F}}
\newcommand{\id}{\mathds{1}}
\renewcommand{\d}{\mathrm{d}}
\title{Merging sequential e-values via martingales}
\author{Vladimir Vovk\thanks%
  {Department of Computer Science,
  Royal Holloway, University of London,
  Egham, Surrey, UK.
  E-mail: \href{mailto:v.vovk@rhul.ac.uk}{v.vovk@rhul.ac.uk}.}
\and Ruodu Wang\thanks%
  {Department of Statistics and Actuarial Science,
  University of Waterloo,
  Waterloo, Ontario, Canada.
  E-mail: \href{mailto:wang@uwaterloo.ca}{wang@uwaterloo.ca}.}}
\begin{document}
\maketitle

\begin{abstract}
  We study the problem of merging sequential or independent e-values into one e-value or e-process.
  We describe a class of e-value merging functions via martingales
  and show that it dominates all merging methods for sequential e-values.
  All admissible methods for constructing e-processes can also be obtained in this way. 
  In the case of merging independent e-values, the situation becomes much more complicated,
  and we provide a general class of such merging functions
  based on martingales applied to reordered data.
\end{abstract}

\section{Introduction}

E-values, as an alternative to the standard statistical notion of p-values,
have received increasing attention in recent years;
see, e.g., \cite{S21,VW21,Grunwald/etal:arXiv1906,WR21,RB23}.
E-values are often obtained through nonnegative martingales
or, more generally, e-processes;
see \cite{Shafer/etal:2011,RRLK20}.
The defining feature of e-processes is that they produce valid e-values under optional stopping,
and therefore they are widely applicable in testing with sequential observations.
This feature of e-processes and martingales is fundamental
in the construction of time-uniform confidence sequences;
see \cite{WRB20,HRMS21}.

A standard approach to constructing an e-value or an e-process from sequentially arriving data
is first to compute an e-value from each observation or each batch of observations,
and then to combine the resulting e-values into a single e-value or an e-process.
A simple example is the likelihood ratio process,
which is the product process of many individual e-values (which are likelihood ratios based on each observation).
For more sophisticated examples of e-values and e-processes constructed from individual e-values,
see \cite{Grunwald/etal:arXiv1906}, \cite{HZ22}, and \cite{WWZ22} in different contexts.

This paper is dedicated to a thorough study of combining independent or sequential e-values.
This problem gives rise to substantial mathematical challenges,
and our results lead to practical guidelines for handling multiple e-values in statistical testing.
Our main finding is that there exists a class of explicit methods
which includes all admissible merging methods for sequential e-values;
for independent e-values, the picture is much more complicated.

We define e-variables (whose realizations are e-values)
and discuss their merging problems in Section \ref{sec:2}. 
A merging function which transforms independent e-values into one e-value is called an ie-merging function,
and one which transforms sequential e-values into one e-value is called an se-merging function.
An example illustrating the differences between sequential and independent e-variables
in a simple testing problem is described in Section \ref{sec:example}. 

We start our theoretical treatment from a class of se-merging functions in Section~\ref{sec:sequential}.
We show that the class of martingale merging functions includes all admissible se-merging functions
(Theorem \ref{thm:inverse}).
The class of martingale merging functions is formulated
using the game-theoretic version of martingales
as defined in \cite{Shafer/Vovk:2019}.
Although this version is different from martingales
defined in measure-theoretic probability theory,
the two versions are closely related and reflect the same basic principle.
The notion of a martingale was introduced by Jean Ville \cite{Ville:1939}
as extension (and correction) of von Mises's \cite{Mises:1928} notion of a gambling system.
Kolmogorov \cite{Kolmogorov:1963} came up with another extension of von Mises's notion
(later but independently a similar extension was proposed
by Loveland \cite{Loveland:1966TAMS,Loveland:1966ZML}).
We pay special attention to the product merging function,
which is the simplest se-merging function,
with a weak form of optimality \cite[Proposition 4.2]{VW21}.
We show that when applied to independent precise e-variables,
the product function has the (undesirable) largest variance among a large class of se-merging functions,
making it sub-optimal in some settings.

In Section \ref{sec:anytime} we define e-processes
and connect them to martingale merging functions.
We show that, in a natural sense, for a process obtained from sequential e-values,
anytime validity (the defining property of e-processes) is equivalent
to being generated from a martingale merging function (Theorem \ref{th:2}).

In Section~\ref{sec:independent} we focus on independent  e-values.
We combine Ville's and Kolmogorov's extensions of von Mises's gambling systems
to obtain a class of ie-merging functions (Theorem \ref{th:3})
called generalized martingale merging functions,
allowing for a reordering of the independent e-variables.
This class, although being a natural generalization of martingale merging functions,
turns out to be a strict subset of the set of all admissible ie-merging functions
(Example \ref{ex:ct}).
A full description of the set of all admissible ie-merging functions remains unclear.
With this open question and several others,
we conclude the paper in Section \ref{sec:conclusion}.

\section{E-variables}
\label{sec:2}

A (statistical) hypothesis $H$ is a collection of probability measures
on a measurable space (the underlying sample space).
A \emph{p-variable} $P$ for a hypothesis $H$ is a random variable that satisfies
$Q(P\le \alpha)\le \alpha$ for all $\alpha \in (0,1)$ and all $Q\in H$.
In other words, a p-variable is stochastically larger
than the uniform distribution $\mathrm{U}[0,1]$.
P-variables are often truncated at $1$ and assumed to take values in $[0,1]$.
An \emph{e-variable} $E$ for a hypothesis $H$ is a $[0,\infty]$-valued random variable
satisfying $\E^Q[E]\le1$ for all $Q\in H$.
We will use \emph{e-values} for the realization of e-variables,
and sometimes we use terms such as ``independent e-values'',
which of course means realized values of independent e-variables.
We fix a positive integer $K$ and set $[K]:=\{1,\dots,K\}$.

As shown in \cite{VW21}, for admissibility results on merging functions,
it is harmless to consider the case of a singleton $H=\{\P\}$
for an atomless probability measure $\P$.
We will assume this throughout the paper.
The underlying probability space $(\Omega,\FFF,\P)$ is implicit
and should be clear from the context.
The notation $\E$ is used for the expectation with respect to $\P$.
Without loss of generality we only consider e-variables $E$
taking values in $[0,\infty)$ (since $E<\infty$ a.s.).

We also use standard terminology in probability theory.
A \emph{filtration} in $(\Omega,\FFF,\P)$ is an increasing sequence
$\FFF_1\subseteq\dots\subseteq\FFF_K$ of sub-$\sigma$-algebras of $\FFF$;
we also set $\FFF_0:=\{\emptyset,\Omega\}$.
A process $(X_k)_{k\in \{0,\dots,K\}}$ adapted to $(\FFF_k)_{k\in\{0,\dots,K\}}$
(i.e., with each $X_k$ being $\FFF_k$-measurable)
is a \emph{martingale} on $(\Omega,\FFF,\P)$
if  $\E[X_{k}\mid\FFF_{k-1}] = X_{k-1}$ for all $k\in [K]$,
and the process is a \emph{supermartingale} if $\E[X_{k}\mid\FFF_{k-1}] \le X_{k-1}$,
where $X_1,\dots,X_K$ are assumed integrable;
all relations (such as equalities and inequalities) involving conditional expectations
are understood in the a.s.\ sense.
By a \emph{test martingale} we mean a nonnegative martingale with initial value 1, $X_0=1$.

E-variables $E_1,\dots,E_K$ are \emph{sequential} if $\E[E_{k} \mid E_1,\dots,E_{k-1}]\le1$ for $k\in [K]$.
For a function $F:[0,\infty)^K\to[0,\infty)$ (we assume all functions to be Borel):
\begin{enumerate}
\item
  $F$ is an \emph{e-merging} function
  if, for any e-variables $E_1,\dots,E_K$,
  $F(E_1,\dots,E_K)$ is an e-variable;
\item
  $F$ is an \emph{se-merging} (sequential e-merging) function
  if, for any sequential e-variables $E_1,\dots,E_K$,
  $F(E_1,\dots,E_K)$ is an e-variable;
\item
  $F$ is an \emph{ie-merging} (independent e-merging) function 
  if, for any independent e-variables $E_1,\dots,E_K$,
  $F(E_1,\dots,E_K)$ is an e-variable.
\end{enumerate}
An se-merging (resp.\ ie-merging) function is \emph{admissible}
if it is not dominated by any other se-merging (resp.\ ie-merging) function.

We say that an e-variable  $E$ is \emph{precise} if $\E[E]=1$,
and an se-merging function $F$ is \emph{precise}
if $\E[F(E_1,\dots,E_K)]=1$ for all precise sequential e-variables $E_1,\dots,E_K$. 
This property is satisfied by all examples of merging functions in \cite{VW21}.
Note that for precise sequential e-variables,
it holds that $\E[E_{k} \mid E_1,\dots,E_{k-1}]=1$ for $k\in [K]$.

Since independent e-variables are also sequential e-variables,
the class of ie-merging functions  contains that of se-merging functions.
We will see later in this paper that these two classes are not identical.
An example of se-merging function is the product function
\[
  (e_1,\dots,e_K)\mapsto\prod_{k=1}^K e_k,
\]
which is shown to be optimal in a weak sense
among all ie-merging functions in \cite[Proposition 4.2]{VW21}.
Moreover, if $E_1,\dots,E_K$ are sequential e-variables,
then $(\prod_{k=1}^t E_k)_{t\in\{0,\dots,K\}}$ is a supermartingale.

There is a small difference between our definitions and the ones in \cite{VW21};
namely, we do not require ie-merging and se-merging functions
to be increasing (in the non-strict sense) in all arguments.
This relaxation is quite natural under the betting interpretation
(see Section \ref{sec:sequential}):
if we gain evidence in an early round of betting,
then we may reduce our bet in the next round
(as in, e.g., the ``hit-and-stop'' strategy
in Example \ref{ex:hit-and-stop} below),
which leads to non-monotonicity of the resulting merging function.  

The next result shows that it suffices to consider
bi-valued e-variables in search for ie-merging functions.
\begin{proposition}\label{prop:two-point}
  A function $F:[0,\infty)^K \to [0,\infty)$ is an ie-merging function if and only if
  $F(E_1,\dots,E_K)\le 1$ for all independent e-variables $E_1,\dots,E_K$
  each taking at most two values.
\end{proposition}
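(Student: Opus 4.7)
Plan: The ``only if'' direction is immediate, since every two-point e-variable is in particular an e-variable. For the converse, the plan is to peel off one argument at a time, reducing to a one-variable statement.

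The key one-variable lemma I would establish is: if $H:[0,\infty)\to[0,\infty)$ is Borel and $\int H\,\d\nu\le 1$ for every two-point probability measure $\nu$ on $[0,\infty)$ with $\int x\,\d\nu(x)\le 1$, then $\int H\,\d\mu\le 1$ for every probability measure $\mu$ on $[0,\infty)$ with $\int x\,\d\mu(x)\le 1$. Specialising the hypothesis to $\nu=\delta_a$ (for $a\in[0,1]$) and to $\nu=(1-1/b)\delta_0+(1/b)\delta_b$ (for $b>1$) forces $H(x)\le \max(1,x)$, so all integrals in question are finite. A signed perturbation supported on any three points of the support of $\mu$ that simultaneously kills total mass and first moment shows that the extreme points of the convex set of probability measures on a compact interval with mean at most $1$ are exactly those of support size at most two; a Choquet decomposition then gives the lemma when $H$ is bounded and $\mu$ is compactly supported, and the general case follows by routine truncation.

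With the one-variable lemma in hand, I would iterate on the number of ``general'' slots. For $k=0,1,\dots,K$ let $S_k$ denote the statement that
\[ \int F\,\d(\mu_1\otimes\dots\otimes\mu_k\otimes\nu_{k+1}\otimes\dots\otimes\nu_K)\le 1 \]
for all probability measures $\mu_1,\dots,\mu_k$ with mean at most $1$ and all two-point probability measures $\nu_{k+1},\dots,\nu_K$ with mean at most $1$. After rewriting expectations as integrals against product measures using independence, the proposition's hypothesis is exactly $S_0$, and $S_K$ is the desired conclusion. To pass from $S_k$ to $S_{k+1}$, fix the measures in all slots other than slot $k+1$ and apply the one-variable lemma to
\[ H(x):=\int F\,\d(\mu_1\otimes\dots\otimes\mu_k\otimes\delta_x\otimes\nu_{k+2}\otimes\dots\otimes\nu_K); \]
$S_k$ supplies $\int H\,\d\nu_{k+1}\le 1$ for every two-point $\nu_{k+1}$ with mean at most $1$, and the lemma upgrades this to $\int H\,\d\mu_{k+1}\le 1$ for an arbitrary such $\mu_{k+1}$.

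The main technical hurdle is the rigour of the extreme-point argument in the one-variable lemma, both because the state space $[0,\infty)$ is non-compact and because $H$ may be an unbounded Borel function. Both issues are handled by standard truncation and monotone convergence: replacing $H$ by $\min(H,n)$ preserves the two-point hypothesis, and moving the $\mu$-mass on $(M,\infty)$ to $0$ yields a probability measure on $[0,M]$ whose mean is still at most $1$, so the compact-case bound transfers to the original $\mu$ in the limits $n\to\infty$ and $M\to\infty$.
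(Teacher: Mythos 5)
Your proof is correct, and it reaches the paper's conclusion by a genuinely more self-contained route. The common core is the reduction of each marginal to bi-atomic laws, but the paper gets this for free by citing an explicit decomposition lemma (\cite[Lemma 2.1]{WW15}: any distribution with finite mean is a mixture of bi-atomic distributions with the same mean), writing each $\mu_k=\int_\R\mu_{k,t}\,\nu_k(\d t)$ and then handling all $K$ coordinates in a single Tonelli step, since the product of the $\mu_k$ is automatically a mixture of products of bi-atomic laws. You instead prove the one-dimensional reduction from scratch via an extreme-point/Choquet argument with truncation, and then upgrade one coordinate at a time through the induction $S_k\Rightarrow S_{k+1}$. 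That induction is sound: the a priori bound $H(x)\le\max(1,x)$ obtained from degenerate and $\{0,b\}$-supported test measures disposes of finiteness of $H$, and Tonelli justifies all interchanges of integrals since everything is nonnegative. The price of your route is the Choquet machinery on a non-compact state space with unbounded Borel integrands --- exactly the technical debt the paper avoids by quoting the explicit mixture; the benefit is that nothing is imported. One small imprecision: the extreme points of $\{\mu \text{ on } [0,M]:\int x\,\d\mu\le1\}$ are not \emph{exactly} the measures of support size at most two --- for instance $\tfrac12\delta_0+\tfrac12\delta_{1/2}$ is a nontrivial convex combination of the two elements $\delta_0$ and $\delta_{1/2}$ of the set and hence not extreme. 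What your three-point perturbation actually yields, and all that your argument needs, is the one inclusion that every extreme point has support of size at most two.
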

\begin{proof}
  The ``only if'' statement is straightforward.
  Below we show the ``if'' statement.
  Let $E_1,\dots,E_K$ be independent e-variables; their means are at most $1$.
  Denote by $\mu_k$ the distribution of $E_k$ for $k\in [K]$.
  Note that any distribution with a finite mean can be written
  as a mixture of bi-atomic distributions with the same mean
  (see \cite[Lemma 2.7]{W14}, which also shows that the bi-atomic distributions
  can be chosen indexed by $(0,1)$).
  Therefore, for each $k\in[K]$, we have a decomposition
  $\mu_k = \int_\R \mu_{k,t} \nu_k(\d t)$,
  where each $\mu_{k,t}$ is a bi-atomic distribution with mean at most $1$,
  and $\nu_k$ is a Borel probability measure on $\R$.
  If $F$ merges all bi-valued independent e-variables into an e-variable,
  then
  \[
    \int_{\R^K} F(e_1,\dots,e_K) \prod_{k=1}^K \mu_{k,t_k} (\d e_k) \le 1
  \]
  for all $t_1,\dots,t_K\in\R$.
  It follows that
  \begin{align*}
    \E[ F(E_1,\dots,E_K)]
    &=
    \int_{\R^K}
    \left(
      \int_{\R^K} F(e_1,\dots,e_K) \prod_{k=1}^K \mu_{k,t_k} (\d e_k)
    \right)
    \prod_{k=1}^K \nu_{k}
    (\d t_k)\\
    &\le
    \int_{\R^K}
    \prod_{k=1}^K \nu_{k}(\d t_k)
    =
    1.
  \end{align*}
  This proves the desired ``if'' statement.
\end{proof}
If $F$ is increasing,
then in the ``if'' statement of Proposition \ref{prop:two-point},
it suffices to consider bi-valued e-variables with mean $1$.

\begin{remark} 
  It may be interesting to note that the situations with merging p-values and e-values
  appear to be opposite.
  In this remark we will concentrate on increasing merging functions.
  Merging independent p-values is in some sense trivial:
  for any measurable increasing function $F:[0,1]^K\to\R$ (intuitively, a test statistic),
  the function
  \[
    G(p_1,\dots,p_K)
    :=
   \lambda(\{(q_1,\dots,q_K)\in[0,1]^K\mid F(q_1,\dots,q_K)\le F(p_1,\dots,p_K)\}),
  \]
  where $\lambda$ is the Lebesgue measure on $[0,1]^K$,
  is an ip-merging function
  (an increasing function transforming independent p-values into a p-value),
  and any ip-merging function can be obtained in this way.\footnote%
    {For merging p-values, it suffices to consider those uniformly distributed on $[0,1]$,
    and for such p-values, being sequential and being independent are equivalent.
    Therefore, we do not discuss sequential p-values.}
  On the other hand, merging arbitrarily dependent p-values is difficult,
  in the sense that the structure of the class of all p-merging functions is very complicated
  (see, e.g., \cite{VWW22}, which includes a review of previous results).
  In the case of e-values,
  merging arbitrarily dependent e-values is trivial,
  at least in the case of symmetric merging functions:
  according to \cite[Proposition 3.1]{VW21},
  arithmetic mean essentially dominates any symmetric e-merging function
  (and \cite[Theorem 3.2]{VW21} gives a full description
  of the class of all symmetric e-merging functions).
  Merging sequential and, especially, independent e-values is difficult
  and is the topic of this paper.
\end{remark}

\section{Sequential and independent e-values}
\label{sec:example}

Before presenting our theoretical results,
we describe a simple example illustrating the difference
between independent and sequential e-variables.

Suppose that a scientist is interested in a parameter $\theta_{\textrm{tr}}\in\Theta$,
and iid observations $X_1,\dots,X_K$ from $\theta_{\textrm{tr}}$
are available and sequentially revealed to her.
She tests $H_0: \theta_{\textrm{tr}}  = 0$ against $H_1: \theta_{\textrm{tr}} \in \Theta_1$
where $0\notin\Theta_1\subseteq\Theta$.
(It does not hurt to think about testing
the Gaussian family $\mathrm{N}(\theta,1)$.)
Let $\ell$ be the likelihood ratio function given by
\begin{equation}\label{eq:LR}
  \ell(x;\theta) = \frac{\d Q_\theta}{\d Q_0}(x),
\end{equation}
where $Q_{\theta}$ is the probability measure for one observation
that corresponds to $\theta\in\Theta$.
It is clear that $\ell (X_k;\theta)$
for any $\theta\in\Theta$ and $k\in [K]$ is an e-variable for $H_0$.  
The scientist may choose one of two strategies
(the second being more general):
\begin{enumerate} 
\item[(a)]
  Fix $\theta_1,\dots,\theta_K\in \Theta_1$,
  and define the e-variables $E_k:= \ell (X_k;\theta_k)$ for $k\in [K]$.
  One may simply choose all $\theta_k$ to be the same.
\item[(b)]
  Choose $\theta_1,\dots,\theta_K$ adaptively,
  where $\theta_k$ is estimated from $(X_1,\dots,X_{k-1})$ for each $k$.
  This can be obtained, e.g., as a point estimate of $\theta$
  or by a Bayesian update rule for some prior on $\Theta_1$
  (for a Bayesian update, the mixture may well be outside the model,
  in which case the numerator of \eqref{eq:LR} would have to be replaced
  by a mixture of $Q_{\theta}$ over $\theta\in\Theta_1$).
  Define the e-variables $E_k:=\ell(X_k;\theta_k)$ for $k\in[K]$.
\end{enumerate}
The scientist then combines the e-variables $E_k$, $k\in[K]$,
to form a final e-variable $\prod_{k=1}^K E_k$
(e.g., for using e-values as weights in multiple testing; see \cite{IWR22}).
Both methods produce a valid final e-variable.
Indeed, we can verify that, in (a), $E_1,\dots,E_K$ are independent e-variables,
and in (b), $E_1,\dots,E_K$ are sequential e-variables.

We illustrate the two supermartingales obtained from (a) and (b) in a simple example.
Suppose that an iid sample  $(X_1,\dots,X_K)$ from $\mathrm{N}(\theta_{\textrm{tr}},1)$ is available.
The null hypothesis is $\theta_{\textrm{tr}} = 0$, and the alternative is $\theta_{\textrm{tr}}>0$. 
We set $\theta_{\textrm{tr}}:=0.3$.
We consider five different ways of constructing
$E_k := \ell(X_k;\theta_k)$ for $k\in [K]$:
(i) choose $\theta_k := \theta_{\textrm{tr}}=0.3$ (knowing the true alternative),
  which is growth-optimal (see, e.g., \cite{S21,Grunwald/etal:arXiv1906});
(ii) choose $\theta_k := 0.1$, which is a misspecified alternative;
(iii) choose iid $\theta_k$ following the uniform distribution on $[0,0.5]$;
(iv) choose $\theta_k$ by the Bayesian update with prior $\theta\sim\mathrm{N}(0.1,0.2^2)$;
(v) choose $\theta_k$ with $\theta_1:=0.1$ and $\theta_k$ the maximum likelihood estimate of $\theta$
  based on $X_1,\dots,X_{k-1}$.
The resulting supermartingales (on the logarithmic scale) are plotted in Figure~\ref{fig:1} with $K=500$.
We note that methods (i), (ii), and (iii) are based
on combining independent e-values with the product merging function,
and (iv) and (v) are based on combining sequential e-values with the same merging function.
For sufficiently large sample sizes, the methods (iv) and (v) based on sequential e-values
are more powerful than (ii) and (iii) based on misspecified or random alternatives,
because the latter are not adaptive.
\begin{figure}[t]
  \begin{center}
    \includegraphics[height=5.5cm, trim={0 15 0 5}, clip]{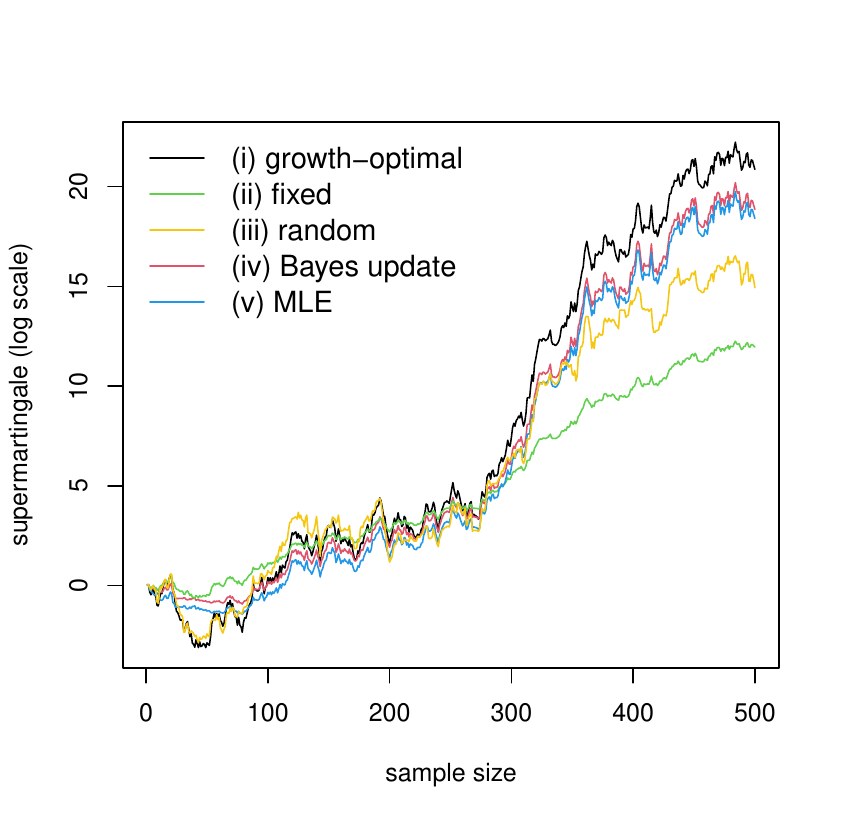}
    \includegraphics[height=5.5cm, trim={0 15 0 5}, clip]{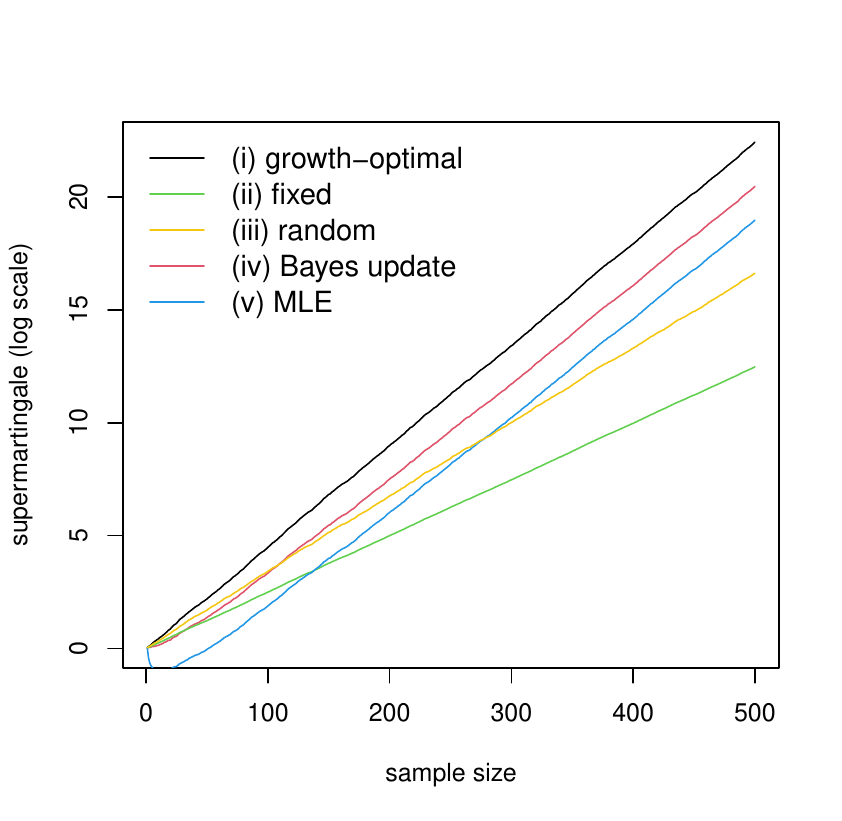}
  \end{center}
  \caption{A few ways of constructing supermartingales from likelihood ratio.
    Left: one run.
    Right: the average of 1000 runs (with average taken on the log values).}
\label{fig:1}
\end{figure}

From this example, we can see that sequential e-variables could be more powerful
(at least in this simple setting),
whereas independent e-variables are more restrictive,
but they allow for more merging methods,
as we will see later in Section \ref{sec:independent}.

\section{Merging sequential e-values}
\label{sec:sequential}

We fix some notation throughout the theoretical development of the paper. 
If $A$ is a measurable space,
we let $A^{<K}$ stand for the measurable space $\bigcup_{k=0}^{K-1}A^k$,
where $A^0:=\{\Box\}$, with $\Box$ denoting the empty sequence. 
For $\mathbf{e}=(e_1,\dots,e_K)\in[0,\infty)^K$ and $k\in[K]$,
we use $\mathbf{e}_{(k)}:=(e_1,\dots,e_k)$
to represent the vector of the first $k$ components of $\mathbf{e}$,
with $\mathbf{e}_{(0)}:=\Box$.
Let $\EEE$ be the class of all e-variables,
i.e., nonnegative random variables $E$ on the underlying probability space
satisfying $\E[E]\le1$.

We first define the notions of a gambling system and a game martingale.
They will form a basis for se-merging functions.
A \emph{gambling system} is a measurable function $s:[0,\infty)^{<K}\to[0,1]$.
The \emph{game martingale}
associated with the gambling system $s$ and initial capital $c\in[0,1]$
is the sequence of functions $S_k:[0,\infty)^K\to[0,\infty)$,
$k=0,\dots,K$,
which is defined recursively by $S_0:=c$ and
\begin{align}\label{eq:S}
  S_{k+1}(\mathbf{e})
  :=
  S_k(\mathbf{e})  
  \bigl(
    s(\mathbf{e}_{(k)}) e_{k+1}
    +
    1-s(\mathbf{e}_{(k)})
  \bigr), 
  \quad
  k=0,\dots,K-1.
\end{align}
(This is a martingale in the generalized sense of \cite{Shafer/Vovk:2019},
which is slightly different from the same notion in measure-theoretic probability theory.)
The intuition is that we observe $e_1,\dots,e_K$ sequentially,
start with capital at most 1,
and at the end of step $k$
invest a fraction $s(\mathbf{e}_{(k)})$ of our current capital in $e_{k+1}$,
leaving the remaining capital aside.
We will also say that we \emph{gamble} the fraction $s$ of our capital
and refer to $s$ as our \emph{bet}.
Then $S_k(\mathbf{e})$,
which depends on $\mathbf{e}$ only via $\mathbf{e}_{(k)}$,
is our resulting capital at time $k$.

\begin{lemma}\label{lem:combination}
  A convex combination of game martingales is a game martingale.
\end{lemma}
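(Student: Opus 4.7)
The plan is to construct, from test martingales $S^{(1)},\dots,S^{(n)}$ with gambling systems $s^{(1)},\dots,s^{(n)}$ and initial capitals $c^{(1)},\dots,c^{(n)}$ and weights $\lambda_1,\dots,\lambda_n\ge0$ summing to $1$, an explicit gambling system $s$ and initial capital $c$ whose associated test martingale equals $T_k:=\sum_i \lambda_i S^{(i)}_k$. The natural initial capital is $c:=\sum_i \lambda_i c^{(i)}$, which lies in $[0,1]$ since each $c^{(i)}\in[0,1]$. For the gambling system, the guiding idea is that at each step the aggregate bet should be the weighted average of the individual bets, weighted by each gambler's current capital. Concretely I would set
\[
s(\mathbf e_{(k)}) := \frac{\sum_{i=1}^n \lambda_i S^{(i)}_k(\mathbf e)\, s^{(i)}(\mathbf e_{(k)})}{T_k(\mathbf e)}
\]
whenever $T_k(\mathbf e)>0$, and $s(\mathbf e_{(k)}):=0$ otherwise.

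I would then carry out three checks. First, verify $s\in[0,1]$: nonnegativity is clear, and the upper bound follows because $s^{(i)}\le1$ and $S^{(i)}_k\ge0$ give a numerator bounded by $\sum_i \lambda_i S^{(i)}_k = T_k$. Second, verify the recursion \eqref{eq:S} matches: expanding
\[
T_k(\mathbf e)\bigl(s(\mathbf e_{(k)})e_{k+1}+1-s(\mathbf e_{(k)})\bigr)
= e_{k+1}\sum_i \lambda_i S^{(i)}_k s^{(i)} + \sum_i \lambda_i S^{(i)}_k - \sum_i \lambda_i S^{(i)}_k s^{(i)},
\]
which equals $\sum_i \lambda_i S^{(i)}_k(s^{(i)}e_{k+1}+1-s^{(i)}) = \sum_i \lambda_i S^{(i)}_{k+1} = T_{k+1}$. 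The base case $T_0=c$ holds by construction. Third, check measurability of $s$: since each $S^{(i)}_k$ and each $s^{(i)}$ is measurable, the numerator and denominator are measurable functions of $\mathbf e_{(k)}$, and the convention at $T_k=0$ poses no issue.

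The only subtle point, and the place where care is needed, is the division by $T_k(\mathbf e)$ when $T_k(\mathbf e)=0$. But in that case the recursion for $T_{k+1}$ gives $0$ regardless of how $s(\mathbf e_{(k)})$ is chosen (because $T_{k+1}=\sum_i\lambda_i S^{(i)}_{k+1}$ and each $S^{(i)}_k$ must vanish where $T_k$ does, so $S^{(i)}_{k+1}=S^{(i)}_k(\cdots)=0$). Hence setting $s=0$ on that null region preserves both \eqref{eq:S} and the required bound $s\in[0,1]$.

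Once these three checks are in place, the process $(T_k)$ coincides with the test martingale generated by $(s,c)$, which completes the proof. No genuine obstacle is expected; the only modest technicality is the bookkeeping at points where the combined capital is zero.
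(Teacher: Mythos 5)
Your proof is correct and is essentially the paper's argument made explicit: the paper passes to the equivalent parametrization $S_{k+1}=S_k+t(\mathbf e_{(k)})(e_{k+1}-1)$ with absolute stake $t=S_k s\ge0$, under which convex combination is immediate, and your capital-weighted average of the fractional bets is exactly the translation of $\sum_i\lambda_i t^{(i)}$ back into the fractional form. The only difference is that you carry out in detail (including the $T_k=0$ bookkeeping) what the paper absorbs into the claimed equivalence of the two definitions.
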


\begin{proof}
  The statement of the lemma follows from the following equivalent (and often useful) definition:
  a game martingale is a sequence of nonnegative functions $S_k:[0,\infty)^K\to[0,\infty)$,
  $k=0,\dots,K$,
  such that $S_0\le1$ and, for some measurable function $t:[0,\infty)^{<K}\to[0,\infty)$,
  we have
  \begin{equation}\label{eq:equivalent}
    S_{k+1}(\mathbf{e})
    =
    S_k(\mathbf{e})
    +
    t(\mathbf{e}_{(k)})
    (e_{k+1} - 1)
  \end{equation}
  for all $k=0,\dots,K-1$ and all $\mathbf{e}\in[0,\infty)^K$.
\end{proof}

A \emph{martingale merging function} is a function $F:[0,\infty)^K\to[0,\infty)$
that can be represented in the form $F=S_K$
for some game martingale $S_k$, $k=0,\dots,K$.
An equivalent formulation is 
\begin{equation}\label{eq:equivalent-2}
  S_{K}(\mathbf{e})
  =
  S_0
  \prod_{k=1}^K
  (1 + s(\mathbf{e}_{(k-1)})
  (e_{k} - 1)),
\end{equation}
where $s$ is a gambling system.
The following two results show that martingale merging functions are se-merging functions
and that any se-merging function is dominated by a martingale merging function.

\begin{lemma}\label{lem:mart_is_se}
  Any martingale merging function is an se-merging function.
\end{lemma}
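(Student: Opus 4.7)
The plan is to reduce the statement to the classical fact that a nonnegative supermartingale with initial value at most $1$ has expectation at most $1$. First I would unpack the hypothesis: we are given sequential e-variables $E_1,\dots,E_K$, meaning $\E[E_k\mid E_1,\dots,E_{k-1}]\le1$, and a martingale merging function $F=S_K$ where $(S_k)$ is built from a gambling system $s:[0,\infty)^{<K}\to[0,1]$ via the recursion in \eqref{eq:S}. Let $\FFF_k:=\sigma(E_1,\dots,E_k)$ with $\FFF_0=\{\emptyset,\Omega\}$, and set $M_k:=S_k(E_1,\dots,E_K)$. Because $S_k(\mathbf e)$ depends on $\mathbf e$ only through $\mathbf e_{(k)}$ (a quick induction on the recursion), $M_k$ is $\FFF_k$-measurable.

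Next I would exploit the alternative formulation \eqref{eq:equivalent} already established in the proof of Lemma~\ref{lem:combination}, which writes
\[
  S_{k+1}(\mathbf e) = S_k(\mathbf e) + t(\mathbf e_{(k)})(e_{k+1}-1),
\]
where, by comparing with \eqref{eq:S}, one sees that $t(\mathbf e_{(k)})=S_k(\mathbf e)\,s(\mathbf e_{(k)})\ge0$ whenever $S_k\ge0$. Nonnegativity of $M_k$ itself is immediate by induction from \eqref{eq:S}, since $s(\mathbf e_{(k)})e_{k+1}+1-s(\mathbf e_{(k)})\ge0$ when $s\in[0,1]$ and $e_{k+1}\ge0$. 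Thus $t(E_1,\dots,E_k)$ is a nonnegative $\FFF_k$-measurable random variable, and conditioning gives
\[
  \E[M_{k+1}\mid\FFF_k]
  = M_k + t(E_1,\dots,E_k)\bigl(\E[E_{k+1}\mid\FFF_k]-1\bigr)
  \le M_k,
\]
using the sequentiality assumption $\E[E_{k+1}\mid\FFF_k]\le1$.

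Hence $(M_k)_{k=0}^{K}$ is a nonnegative supermartingale with $M_0=S_0\le1$, so $\E[F(E_1,\dots,E_K)]=\E[M_K]\le\E[M_0]\le1$, which is exactly the se-merging property. The argument is essentially routine once the right reformulation \eqref{eq:equivalent} is in hand; the only mildly non-obvious point is the appeal to sequentiality via conditioning on $\FFF_k=\sigma(E_1,\dots,E_k)$ rather than on some larger filtration, which is legitimate because $s$ is a function of the observed e-values only, so the bet $t(E_1,\dots,E_k)$ lives in precisely this $\sigma$-algebra. I do not anticipate any real obstacle; the proof should be three or four lines.
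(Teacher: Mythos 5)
Your proof is correct and follows essentially the same route as the paper: identify $M_k=S_k(E_1,\dots,E_K)$ as a (super)martingale with respect to the natural filtration $\FFF_k=\sigma(E_1,\dots,E_k)$ and conclude $\E[M_K]\le M_0\le 1$. The only difference is cosmetic: the paper first reduces to the ``precise'' case $\E[E_k\mid\FFF_{k-1}]=1$ so that $M_k$ is a martingale, whereas you prove the supermartingale inequality directly via \eqref{eq:equivalent} and the nonnegativity of the bet $t=S_k\,s$, which is, if anything, slightly cleaner.
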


\begin{proof}
  Let $E_1,\dots,E_K$ be sequential e-variables.
  Then $(S_k(E_1,\dots,E_K),\FFF_k)$, $k=0,\dots,K$,
  where $S_k$ is defined by \eqref{eq:S}
  and $\FFF_k$ is the $\sigma$-algebra generated by $E_1,\dots,E_k$,
  is a supermartingale.
  This immediately implies $\E[S_K(E_1,\dots,E_K)]\le 1$.
\end{proof}

\begin{theorem}\label{thm:inverse}
  Any se-merging function is dominated by a martingale merging function.
\end{theorem}

\begin{proof}
  Let $F$ be an se-merging function;
  our goal is to construct a dominating martingale merging function.
  First we consider e-variables taking values in the set $2^{-n}\N$,
  where $\N:=\{0,1,\dots\}$;
  let $\EEE_n$ be the set of such e-variables.
  Extend $F$ to shorter sequences of e-values by
  \begin{align}
    F_{n,K}(e_1,\dots,e_K)
    &:=
    F(e_1,\dots,e_K),\notag\\
    F_{n,k}(e_1,\dots,e_k)
    &:=
    \sup_{E\in\EEE_n}
    \E[F_{n,k+1}(e_1,\dots,e_k,E)]
    \label{eq:step}
  \end{align}
  for all $e_1,\dots,e_K\in2^{-n}\N$ and $k=K-1,\dots,0$.
  It is clear that $F_0\le1$.
  By the duality theorem of linear programming,
  for any $k\in\{0,\dots,K-1\}$ and any $e_1,\dots,e_K\in2^{-n}\N$,
  there exists $s\in[0,1]$ such that
  \begin{equation}\label{eq:Sigma}
    \forall e \in 2^{-n}\N:
    F_{n,k+1}(e_1,\dots,e_k,e)
    \le
    (s e + 1 - s) F_{n,k}(e_1,\dots,e_k).
  \end{equation}

  Let us check carefully the application of the duality theorem. 
  Let $c_1,\dots,c_N$ be the first $N$ elements of the set $2^{-n}\N$
  (namely, $c_i:=(i-1)2^{-n}$, $i\in[N]$);
  we are interested in the case $N\to\infty$.
  Restricting $E$ in \eqref{eq:step} to take values $c_1,\dots,c_N$
  with any probabilities $p_1,\dots,p_N$,
  instead of $F_{n,k}(e_1,\dots,e_k)$ we will obtain the solution $F_{n,k,N}$ to the linear programming problem
  \begin{align}
    c_1 p_1 + \dots + c_N p_N &\le 1 \label{eq:LP-1} \\
    p_1 + \dots + p_N &= 1 \\
    f_1 p_1 + \dots + f_N p_N &\to\max \label{eq:LP-2},
  \end{align}
  where $p_i$ are nonnegative variables and $f_i:=F_{n,k+1}(e_1,\dots,e_k,c_i)$, $i=1,\dots,N$.
  It is clear that the sequence $F_{n,k,N}$ is increasing in $N$
  and tends to $F_{n,k}(e_1,\dots,e_k)$ as $N\to\infty$.
  Let us assume $F_{n,k,N}>0$
  (the simple case where $F_{n,k,N}>0$ for all $N$
  should be considered separately).
  The dual problem to \eqref{eq:LP-1}--\eqref{eq:LP-2} is
  $y_1+y_2\to\min$ subject to $y_1\ge0$ and $c_i y_1 + y_2 \ge f_i$ for all $i\in[N]$.
  Then we will have the analogue
  \begin{equation}\label{eq:pre-Sigma}
    \forall e \in 2^{-n}\{0,\dots,N-1\}:
    F_{n,k+1}(e_1,\dots,e_k,e)
    \le 
    (s e + 1 - s)
    F_{n,k,N}
  \end{equation}
  of \eqref{eq:Sigma} when $y_1+y_2=F_{n,k,N}$
  (which is the case for the optimal $(y_1,y_2)$)
  and $y_1=s F_{n,k,N}$ (which is the definition of $s$).
  It is clear from \eqref{eq:pre-Sigma} that $s\le1$, as $e=0$ is allowed.
  Let $s_N$ be an $s$ satisfying \eqref{eq:pre-Sigma}.
  Then any limit point of the sequence $s_N$ will satisfy \eqref{eq:Sigma};
  such limit points exist since $s_N$ lies in the compact set $[0,1]$.

  We have proved the statement of the theorem for e-values in $2^{-n}\N$;
  now we drop this assumption.
  Let $(e_1,\dots,e_K)\in[0,\infty)^K$.
  For each $n$, let $e_{n,k}$ be the largest number in $2^{-n}\N$ that does not exceed $e_k$.
  Set
  \begin{equation*}
    F_k(e_1,\dots,e_k)
    :=
    \lim_{n\to\infty}
    F_{n,k}(e_{n,1},\dots,e_{n,k}).
  \end{equation*}
  Then $F_k$ is a game martingale,
  and the fraction $s$ to gamble after observing $e_1,\dots,e_k$
  can be chosen as the smallest $s\in[0,1]$ satisfying
  \begin{equation}\label{eq:super-Sigma}
    \forall e \in [0,\infty):
    F_{k+1}(e_1,\dots,e_k,e)
    \le
    F_{k}(e_1,\dots,e_k)
    (s e + 1 - s).
  \end{equation}
  The set of such $s$ is obviously closed; let us check that it is non-empty.
  Let $s=s_n$ be a number in $[0,1]$ satisfying \eqref{eq:Sigma}
  with $e_{n,1},\dots,e_{n,k}$ in place of $e_1,\dots,e_k$, respectively.
  Then any limit point of $s_n$ will satisfy \eqref{eq:super-Sigma}. 
\end{proof}

Theorem~\ref{thm:inverse} gives a characterization
of {admissible} se-merging functions.

\begin{corollary}
  The class of all admissible se-merging functions coincides
  with the class of all martingale merging functions.
\end{corollary}

As we mentioned in Section \ref{sec:2},
a martingale merging function is not necessarily increasing in all arguments.
This is because $s(\mathbf{e}_{k-1})(e_k-1)$ in \eqref{eq:equivalent-2}
is generally not increasing or decreasing in $\mathbf{e}_{(k)}$.
Although monotonicity does not hold,
any martingale merging function $F$ satisfies a property of \emph{sequential monotonicity}:
for fixed $k\in [K]$ and  $(e_1,\dots,e_{k-1})\in [0,\infty)^{k-1}$,
the function $e_{k}\mapsto F(e_1,\dots,e_{k-1}, e_k,1,\dots,1)$ is increasing.

\begin{example}\label{ex:hit-and-stop}
  The non-monotonicity of $S_K$ appears naturally in a ``hit-and-stop'' gambling system:
  for a fixed $\alpha\in (0,1)$ and for each $k\in[K]$,
  if $S_k (\mathbf{e}) \ge 1/\alpha$, then we choose $s(\mathbf{e}_{(k)})=0$
  (which implies $s(\mathbf{e}_{(j)})=0$ for all $j\ge k$);
  otherwise we choose $s(\mathbf{e}_{(k)})>0$. 
  It is clear from \eqref{eq:equivalent-2} that $S_K$ is not increasing
  since $s(\mathbf{e}_{(k-1)})(e_k-1)$ is unbounded from above if $s(\mathbf{e}_{(k-1)})>0$.
\end{example}

\subsection*{Examples of martingale merging functions}

The simplest non-trivial gambling system is $s:=1$;
the corresponding game martingale with initial capital 1 is the product
\[
  S_k(e_1,\dots,e_K)
  =
  e_1\dots e_k,
  \quad
  k\in\{0,\dots,K\},
\]
and the corresponding martingale merging function is the product
\begin{align}\label{eq:product}
  P_K(e_1,\dots,e_K)
  :=
  e_1\dots e_K.
\end{align}
This is the most standard se-merging function.

Another martingale merging function is the arithmetic mean
\[
  F(e_1,\dots,e_K)
  :=
  \frac{e_1+\dots+e_K}{K}.
\]
This is in fact an e-merging function
(the most important symmetric one, as explained in \cite{VW21}).
The corresponding game martingale is the mean
\[
  S_k(e_1,\dots,e_K)
  :=
  \frac{e_1+\dots+e_k+K-k}{K}.
\]
(This is easiest to see using the equivalent definition \eqref{eq:equivalent}.)

A more general class of martingale merging functions,
introduced in \cite{VW21},
includes the U-statistics
\begin{equation}\label{eq:U}
  U_n(e_1,\dots,e_K)
  :=
  \frac{1}{\binom{K}{n}}
  \sum_{A \subseteq [K],\lvert A\rvert=n}
  \left(
    \prod_{k\in A} e_{k}
  \right),
  \quad
  n\in\{0,1,\dots,K\}.
\end{equation}
For each $n$,
this is a martingale merging function because each addend in \eqref{eq:U} is,
and a convex combination of game martingales is a game martingale
(Lemma~\ref{lem:combination}).

Our final martingale merging function
has an increasing sequence of numbers $1\le K_1<\dots<K_m<K$ as its parameter
and is defined as
\[
  F(e_1,\dots,e_K)
  :=
  \prod_{i=0}^m
  \frac{e_{K_i+1}+\dots+e_{K_{i+1}}}{K_{i+1}-K_i},
\]
where $K_0$ is understood to be $0$ and $K_{m+1}$ is understood to be $K$.
The corresponding game martingale is
\[
  S_k(e_1,\dots,e_K)
  :=
  \frac{\sum_{j=1}^{K_1}e_j}{K_1}
  \dots
  \frac{\sum_{j=K_{i-1}+1}^{K_i} e_j}{K_i-K_{i-1}} \;
  \frac{\sum_{j=K_{i}+1}^{k} e_j+K_{i+1}-K_i-k}{K_{i+1}-K_i},
\]
where $i$ is the largest number such that $K_i\le k$.

In practical applications,
a good choice of the se-merging function
depends on the joint distribution of the sequential e-variables $E_1,\dots,E_K$. 
A common criterion is to make the expected logarithm, also known as the e-power, to be as large as possible,
that is, to maximize $\E[\log F(E_1,\dots,E_K)]$ where the expectation  is computed under a specific alternative;
see, e.g., \cite{Grunwald/etal:arXiv1906}.
Certainly, such a maximizer  depends  crucially on the alternative, which is often not known to the decision maker.
Without an accurate information on the alternative,
a one-parameter family of martingale merging functions indexed by $\lambda \in (0,1]$,
\[
  F_\lambda (e_1,\dots,e_K)
  :=\prod_{k=1}^K (1-\lambda + \lambda e_k),
\]
can be employed, which corresponds to a constant gambling system $s=\lambda$.
Within this family, $\lambda$ can be chosen larger (close to $1$)
if the decision maker has prior knowledge or confidence that $\E[\log E_k]$, $k=1,\dots,K$, are large,
and  $\lambda$ can be chosen smaller (close to $0$)
if the decision maker anticipates that $\E[\log E_k]$, $k=1,\dots,K$, may be small.

\subsection*{The product se-merging function}

We pay special attention to the product function $P_K$ in \eqref{eq:product}.
This function is optimal in a weak sense \cite[Proposition 4.2]{VW21} among all ie-merging functions.
In what follows, we show that, when  applied to independent precise e-variables,
the product function has the largest variance among precise se-merging functions.
This is an undesirable property of the product function,
and it is one of the motivations that led us to search for alternative se-merging functions.

We first present a simple lemma which is useful in the proof of the next result.
\begin{lemma}\label{lem:1}
  For any ie-merging function $F:[0,\infty)^K\to [0,\infty)$ and $a\ge 1$, 
  the function $G:(e_1,\dots,e_K )\mapsto F(a e_1,e_2,\dots,e_K)/a$ 
  is an ie-merging function.
\end{lemma}
 
\begin{proof}
  Take an event $A$ with $\P(A)=1/a$.
  For any independent e-variables $E_1',\dots,E_K'$,
  we can take independent e-variables $E_1,\dots,E_K$ independent of $A$  
  such that $(E_1,\dots,E_K)$ is distributed identically to $(E'_1,\dots,E_K')$,
  since the probability space is atomless.
  Therefore, it suffices to show $\E[G(E_1,\dots,E_K)]\le1$
  for independent e-variables $E_1,\dots,E_K$ independent of $A$.
  Let $E_1' := a E_1\id_A$; this is an e-variable.
  We can compute
  \begin{equation*}
    \E[G(E_1,\dots,E_K)]
    =
    \frac 1 a \E[F(a E_1,E_2,\dots,E_K)]
    \le
    \E[F(E_1',E_2,\dots,E_K)]
    \le
    1
  \end{equation*}
  (the first inequality following from $F\ge0$).
  Hence, $G$ is an ie-merging function.
\end{proof}

Now we can compare the product function and other se-merging functions
with respect to the variance and second moment of the resulting e-variable.

\begin{proposition}\label{prop:2}
  For any se-merging function $F:[0,\infty)^K\to [0,\infty)$
  and independent nonnegative random variables $X_1,\dots,X_K$ with $\E[X_k]\ge 1$, $k\in[K]$,
  we have
  \begin{equation}\label{eq:2nd}
    \E \left[F(X_1,\dots,X_K)^2 \right]
    \le
    \prod_{k=1}^K \E [X_k^2]
    =
    \E\left[P_K(X_1,\dots,X_K)^2\right].
  \end{equation}
  Moreover, if $E_1,\dots,E_K$ are independent precise e-variables 
  and $F$ is a precise se-merging function, then
  \begin{equation}\label{eq:3rd}
    \mathrm{Var}(F(E_1,\dots,E_K ))
    \le
    \mathrm{Var}(P_K(E_1,\dots,E_K)).
  \end{equation}
\end{proposition}

\begin{proof}  
  First, we argue that it suffices to show \eqref{eq:2nd}
  for all se-merging functions $F$ 
  and $X_1,\dots,X_K$ being precise e-variables.
  Suppose that this condition holds true. 
  For independent nonnegative $X_1,\dots,X_K$ with mean larger than or equal to $1$,
  set $a_k:=\E[X_k]\ge1$, $E_k:=X_k/a_k$ for $k\in[K]$,
  and $a:=\prod_{k=1}^K a_k$.
  Let $G:(e_1,\dots,e_K )\mapsto F(a_1 e_1, \dots,a_K e_K)/a$.
  Clearly, $E_1,\dots,E_K$ are independent precise e-variables. 
  Using Lemma \ref{lem:1} repeatedly,
  we deduce that $G$ is an se-merging function.
  If \eqref{eq:2nd} holds for all precise e-variables,
  then 
  \begin{align*}
    \E\left[F(X_1,\dots,X_K)^2\right]
    &=
    a^2
    \E\left[\left(\frac{F(a_1E_1,\dots,a_KE_K)}{a}\right)^2\right] \\
    &=
    a^2
    \E\left[G(E_1,\dots,E_K)^2\right] \\
    &\le
    a^2
    \E\left[P_K(E_1,\dots,E_K)^2\right] \\
    &=
    \E\left[P_K(X_1,\dots,X_K)^2\right].
  \end{align*}
  Therefore, the general case of \eqref{eq:2nd}
  follows from the case of precise e-variables.

  Let $E_1,\dots,E_K$ be independent precise e-variables;
  we will show
  \begin{equation}\label{eq:4th}
    \E\left[F(E_1,\dots,E_K)^2 \right]
    \le
    \prod_{k=1}^K \E[E_k^2].
  \end{equation}
  Using Theorem \ref{thm:inverse}, $F$ is dominated by a martingale merging function.
  Hence, it suffices to show the proposition for a martingale merging function $F$.
  Denote the gambling system of $F$ by $s$ and the associated game martingale by $S$.
  We show the proposition by induction.
  The inequality \eqref{eq:4th} holds for $K=1$ since
  \[
    \mathrm{Var}(1-\lambda +\lambda E_1)
    =
    \lambda^2 \mathrm{Var}(E_1)
    \le
    \mathrm{Var}(E_1)
  \]
  for all $\lambda\in[0,1]$.
  To argue by induction, suppose that
  \begin{equation}\label{eq:5th}
    \E[G(E_1,\dots,E_{K-1})^2] \le \prod_{k=1}^{K-1}\E[E_k^2]
  \end{equation}
  for every se-merging function $G:[0,\infty)^{K-1}\to [0,\infty)$.
  Let us write $\mathbf Y:= (E_1,\dots,E_{K-1})$, $W:=S_{K-1}(\mathbf Y)$,
  and $s :=s(\mathbf Y)$. Since  $ \E[  E_K ^2  ] \ge (\E[  E_K   ])^2= 1,$ 
  we have
  \begin{align*}
    \E[F(E_1,\dots,E_{K})^2 \mid \mathbf Y]
    &=
    \E[W^2(s E_K + 1-s)^2 \mid \mathbf Y]  
    \\
    &= W^2\left(s^2 \E[E_K^2]+ (1-s)^2 +2s(1-s)\E[E_K ]\right)
    \\
    &\le W^2\left(s^2 + (1-s)^2 + 2s(1-s)\right)\E[E_K^2]
    \\
    &= W^2\E[E_K^2].
  \end{align*}
  As a consequence,
  \[
    \E[F(E_1,\dots,E_{K})^2]
    \le
    \E[S_{K-1}(\mathbf Y)^2] \E[E_K^2]
    \le
    \prod_{k=1}^K \E[E_k^2],
  \]
  where the last inequality follows by the inductive assumption \eqref{eq:5th}
  and the fact that $S_{K-1}$ is an se-merging function.
  Therefore, we obtain \eqref{eq:2nd}.
  If $F$ is a precise se-merging function,
  we obtain \eqref{eq:3rd} from \eqref{eq:4th}
  since $\E[F(E_1,\dots,E_K)]=\E[P_K(E_1,\dots,E_K)]=1$.
\end{proof}
 
Proposition \ref{prop:2} says that using the product function $P_K$
results in the largest variance under a null hypothesis
under which the e-variables are precise and independent.
This does not imply that the same holds under alternative hypotheses,
which can be arbitrary.
Nevertheless,  we see from \eqref{eq:2nd} in Proposition~\ref{prop:2} that,
if the e-variables have means larger than $1$
and are independent under the alternative hypothesis,
then the second moment of $P_K(E_1,\dots,E_K)$
is larger than (or equal to) that of $F(E_1,\dots,E_K)$ for any se-merging function $F$.
This suggests that one could anticipate
that $P_K(E_1,\dots,E_K)$ likely has a large variance also in this case.

\begin{remark}
  Proposition \ref{prop:2} says that,
  under some conditions, the product function has the largest second moment,
  but this observation does not generalize to other quantities
  such as the expected logarithm (also known as the e-power). 
  Consider two e-variables $1$ and $E$ where $\E[E]>1$ under the alternative hypothesis,
  and the class of se-merging functions
  $f_{\lambda}:(e_1,e_2)\mapsto e_1(1-\lambda+\lambda e_2)$ indexed by $\lambda \in [0,1]$.
  The expected logarithm of $ (1-\lambda+\lambda E)$ is given by $\E[\log (1-\lambda+\lambda E)]$,
  which is concave in $\lambda$.
  This quantity is not necessarily maximized by $\lambda=1$.
  Indeed, when $\E[\log E]<0$, the maximizer $\lambda$ is strictly between $0$ and $1$.
  Therefore, the product function, corresponding to $\lambda=1$,
  does not necessarily have the largest  expected logarithm.
\end{remark}

\section{E-processes and anytime validity}\label{sec:anytime}

In this section, we connect the martingale merging functions to e-processes. 
All supermartingales, martingales, and stopping times are defined
with respect to the filtration $(\FFF_k)$
(the \emph{natural filtration})
generated by $\mathbf{E}=(E_1,\dots,E_K)$.
Formally, a random variable $\tau$ taking values in $\{0,1,\dots,K\}$
is a \emph{stopping time} if, for each $k\in\{0,1,\dots,K\}$,
the set $\{\tau\le k\}$ is $\FFF_k$-measurable.

In scientific discovery, experiments are often conducted sequentially in time,
and a discovery may be  reported at the time when enough evidence is gathered.
Therefore, with a vector $\mathbf{E}$ of sequential e-values,
it is desirable to require validity of a test not only at the fixed time $K$,
but also at a stopping time $\tau$.
This leads to the definition of e-processes:
an \emph{e-process} $(E_k)_{k\in\{0,\dots,K\}}$ is a nonnegative stochastic process
adapted to $(\FFF_k)$ such that $\E[E_\tau]\le1$
for any stopping time $\tau$.
The property $\E[E_\tau]\le1$ will be called \emph{anytime validity}.
Anytime validity is automatically achieved by using a game martingale:
since $(S_k(\mathbf{E}))_{k=0,\dots,K}$ is then a martingale,
$S_\tau(\mathbf{E})$ is an e-variable for any stopping time $\tau$.
Therefore, such a process $k\mapsto S_k(\mathbf{E})$ is an e-process.

Conversely, if a sequence of functions $F_k:[0,\infty)^K\to[0,\infty)$,
$k=0,\dots,K$,
satisfies, for any sequence of e-values $\mathbf{E}$,
\begin{enumerate}
\item[(a)]
  $(F_k(\mathbf{E}))_{k=0,\dots,K}$ is adapted to the natural filtration of $\mathbf{E}$,
\item[(b)]
  \emph{anytime validity}, i.e., $F_\tau(\mathbf{E})$ is an e-variable
  for any stopping time $\tau$,
\item[(c)]
  \emph{precision}, i.e., $F_k(\mathbf{E})$ is precise for all $k\in\{0,\dots,K\}$,
\end{enumerate}
then we can show that it is a game martingale.

\begin{theorem}\label{th:2}
  For a sequence of functions $F=(F_k)_{k=0,\dots,K}$,
  $F_k:[0,\infty)^K\to[0,\infty)$,
  the following are equivalent: 
  \begin{enumerate}
  \item[(i)]
    $F$ is a game martingale with initial value $1$;
  \item[(ii)]
    $F(\mathbf{E})$ is a martingale with initial value $1$
    for any vector $\mathbf{E}$ of precise and sequential e-variables;
  \item[(iii)]
    $F$ is adapted, anytime valid, and precise,
    i.e., satisfies (a)--(c).
  \end{enumerate}
\end{theorem}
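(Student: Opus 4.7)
The plan is to prove the cycle $(\mathrm{i})\Rightarrow(\mathrm{iii})\Rightarrow(\mathrm{ii})\Rightarrow(\mathrm{i})$, working with the implicit initial value $F_0:=1$ that is forced by the test-martingale definition for (i) and by precision at $k=1$ for (iii). For $(\mathrm{i})\Rightarrow(\mathrm{iii})$, the recursion \eqref{eq:S} together with $\E[E_{k+1}\mid\FFF_k]\le 1$ and $s\in[0,1]$ gives
\[
\E[F_{k+1}(\mathbf E)\mid\FFF_k]=F_k(\mathbf E)\bigl(s(\mathbf E_{(k)})\,\E[E_{k+1}\mid\FFF_k]+1-s(\mathbf E_{(k)})\bigr)\le F_k(\mathbf E),
\]
so $(F_k(\mathbf E))$ is a nonnegative supermartingale starting at $1$ and anytime validity follows from the optional stopping theorem for bounded stopping times; when $\mathbf E$ is precise the inequality becomes equality, yielding an outright martingale and hence precision.

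For $(\mathrm{iii})\Rightarrow(\mathrm{ii})$, fix a precise sequential $\mathbf E$, an index $k\in\{0,\dots,K-1\}$, and $A\in\FFF_k$. Applying anytime validity to the bounded stopping times $\tau_A^+:=k\id_A+(k+1)\id_{A^c}$ and $\tau_A^-:=k\id_{A^c}+(k+1)\id_A$ gives $\E[F_k(\mathbf E)\id_A]+\E[F_{k+1}(\mathbf E)\id_{A^c}]\le 1$ together with its counterpart after swapping $A$ and $A^c$; substituting $\E[F_{k+1}(\mathbf E)]=1$ from precision at $k+1$ squeezes the two bounds into $\E[F_k(\mathbf E)\id_A]=\E[F_{k+1}(\mathbf E)\id_A]$ for every $A\in\FFF_k$, i.e., the martingale identity $\E[F_{k+1}(\mathbf E)\mid\FFF_k]=F_k(\mathbf E)$.

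The main step is $(\mathrm{ii})\Rightarrow(\mathrm{i})$, whose aim is to put $F_{k+1}$ into the affine form $F_{k+1}(\mathbf e_{(k)},e)=F_k(\mathbf e_{(k)})+t(\mathbf e_{(k)})(e-1)$ of \eqref{eq:equivalent}. Given any $(e_1,\dots,e_k)\in[0,\infty)^k$, I construct a precise sequential $(E_1,\dots,E_k)$ that puts positive mass at this tuple by letting each $E_j$, independently of history, be a mean-$1$ two-point variable having $e_j$ as one of its atoms (possible for every $e_j\ge 0$). Conditioning on $\mathbf E_{(k)}=(e_1,\dots,e_k)$ and letting $E_{k+1}$ have an arbitrary mean-$1$ law $\mu$, the martingale identity from (ii) reads
\[
\int F_{k+1}(e_1,\dots,e_k,e)\,\mu(\d e)=F_k(e_1,\dots,e_k);
\]
specialising $\mu$ to $\tfrac{b-1}{b-a}\delta_a+\tfrac{1-a}{b-a}\delta_b$ with $0\le a\le 1\le b$ forces $F_{k+1}(\mathbf e_{(k)},\cdot)$ to be affine with slope $t(\mathbf e_{(k)}):=F_k(\mathbf e_{(k)})-F_{k+1}(\mathbf e_{(k)},0)$, while nonnegativity of $F_{k+1}$ at $e=0$ and as $e\to\infty$ forces $0\le t\le F_k$, so the gambling system $s:=t/F_k$ (with $s:=0$ when $F_k=0$) regenerates $F$ via \eqref{eq:S}. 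The hardest part is that the martingale identity from (ii) holds only almost surely in $\mathbf E_{(k)}$, so lifting it to a pointwise statement at every deterministic tuple requires the above two-point construction (which realises each such tuple as a positive-probability atom) together with the observation that the explicit formulas for $t$ and $s$ are Borel in $\mathbf e_{(k)}$ since $F_k$ and $F_{k+1}$ are.
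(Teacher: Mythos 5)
Your proposal is correct, and for the decisive implication it takes a genuinely different route from the paper. The paper proves (iii)$\Rightarrow$(i) directly: it first shows $\E[F_\tau(\mathbf E)]=1$ for every stopping time $\tau$ by completing $\tau$ to a family $\tau,\tau_1,\dots,\tau_{K-1}$ whose realizations always permute $(1,\dots,K)$, then deduces the martingale identity from a two-stopping-time contradiction, and finally invokes Theorem~\ref{lem:inverse} to obtain a dominating test martingale $S$ with $\E[F_K(\mathbf E)]=\E[S_K(\mathbf E)]=1$, forcing $F_k(\mathbf E)=S_k(\mathbf E)$ and hence $F_K=S_K$. Your (ii)$\Rightarrow$(i) bypasses Theorem~\ref{lem:inverse} (and its linear-programming duality) entirely: you realize an arbitrary tuple $(e_1,\dots,e_k)$ as a positive-probability atom of an independent precise vector, test the martingale identity against all two-point mean-one laws for $E_{k+1}$, and read off the affine form \eqref{eq:equivalent} together with the constraint $0\le t\le F_k$ that yields the bet $s=t/F_k$; this is self-contained and constructive, at the modest cost of having to lift an a.s.\ identity to a pointwise one, which your atom construction handles. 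Your (iii)$\Rightarrow$(ii) squeeze via the stopping times $\tau_A^{\pm}$ is the same idea as the paper's $\eta,\eta'$ argument, just organized per event $A\in\FFF_k$; note that it uses precision at level $k$ as well as at level $k+1$. Finally, both you and the paper implicitly normalize the martingale in (ii) to start at $1$ and take $F_k$ to depend on $\mathbf e$ only through $\mathbf e_{(k)}$ (e.g.\ by fixing the unrevealed coordinates to $1$, as your construction effectively does); without some such convention, (ii) as literally stated would not pin down $c=1$.
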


\begin{proof} 
  The implications $\text{(i)}\Rightarrow\text{(ii)}$ and $\text{(ii)}\Rightarrow\text{(iii)}$
  are straightforward.
  Below we show $\text{(iii)}\Rightarrow\text{(i)}$.

  Take any precise and sequential e-variables $E_1,\dots,E_K$,
  and let $\mathcal{F}=(\mathcal{F}_k)_{k=0,\dots,K}$
  be the natural filtration of $\mathbf{E}=(E_1,\dots,E_K)$.

  Let $\tau$ be any ($\mathcal{F}$-)stopping time.
  First, we claim  that $\E[F_\tau(\mathbf{E})]=1$ holds.
  To show this claim, for $j=1,\dots,K-1$ define
  \[
    \tau_j
    :=
    \begin{cases}
      j & \text{if $\tau>j$} \\
      j+1 & \text{if $\tau\le j$}.
    \end{cases}
  \]
  Clearly, $\tau_j$ is a stopping time for each $j$. 
  Moreover, the realization of $(\tau,\tau_1,\dots,\tau_{K-1})$
  is always a permutation of $(1,\dots,K)$.
  Hence, using (c), we have 
  \[
    \E
    \left[
      F_\tau(\mathbf{E})+ \sum_{j=1}^{K-1}F_{\tau_j}(\mathbf{E})
    \right]
    =
    \E
    \left[
      \sum_{k=1}^{K }F_{k}(\mathbf{E})
    \right]
    =
    K.
  \]
  Using (b), $\E[F_{\tau_j}(\mathbf{E})]\le 1$ for each $j$.
  This implies $\E[F_{\tau }(\mathbf{E})]\ge 1$. 
  Therefore, $\E[F_\tau(\mathbf{E})]=1$ for any stopping time $\tau$.

  If, for some $k=0,\dots,K-1$,
  the event $A:=\{\E[F_{k+1}(\mathbf{E})\mid\FFF_{k}]>F_{k}(\mathbf{E})\}$
  has a positive probability, 
  then $\eta := k\id_{A}+K\id_{A^c}$ and $\eta':= (k+1)\id_{A}+K\id_{A^c}$,
  which are stopping times by (a),
  satisfy $\E[F_{\eta}(\mathbf{E})]<\E[F_{\eta'}(\mathbf{E})]$,
  violating the property that $\E[F_\tau(\mathbf{E})]=1$ for any stopping time $\tau$.
  Hence, $\P(A)=0$.
  Similarly, $\P(\E[F_{k+1}(\mathbf{E})\mid\FFF_{k}] < F_{k}(\mathbf{E})) = 0$.
  Therefore, $\E[F_{k+1}(\mathbf{E})\mid \FFF_{k}] = F_{k}(\mathbf{E})$ almost surely,
  and $(F_k (\mathbf{E}))_{k=0,\dots,K}$ is an $\FFF$-martingale.
 
  Note that $F_K$ is an se-merging function.
  By Theorem \ref{thm:inverse}, we have $F_K(\mathbf{E})\le S_K(\mathbf{E})$
  for some game martingale $(S_k)_{k=0,\dots,K}$.
  Since $\E[F_K(\mathbf{E})]=\E[S_K(\mathbf{E})]$,
  we have $F_K(\mathbf{E})= S_K(\mathbf{E})$ almost surely.
  Using the fact that both $(F_k(\mathbf{E}))_{k=1,\dots,K}$ and $(S_k(\mathbf{E}))_{k=1,\dots,K}$
  are martingales,
  we have, for $k=1,\dots,K$, almost surely
  \[
    F_{k}(\mathbf{E})
    =
    \E[F_{K}(\mathbf{E})\mid\FFF_{k}]
    =
    \E [S_K(\mathbf{E})\mid\FFF_{k}]
    =
    S_{k}(\mathbf{E}).
  \]
  Since $\mathbf{E}$ is arbitrary, we have $F_k=S_k$ for all $k$. 
\end{proof}

Theorem \ref{th:2} may be regarded as a counterpart for game martingales
of a classic result in probability theory that the optional stopping property characterizes martingality
(as in \cite[Theorem II.77.6]{Rogers/Williams:2000}).

Theorem \ref{th:2} implies that,
in order to get an anytime-valid and precise method for merging sequential e-values
(i.e., to get a ``precise e-process''),
the only tool one could rely on is the class of game martingales.
Another statement of this kind is established in \cite[Theorem 18]{RRLK20}:
the class of e-processes coincides
with the class of nonnegative processes dominated by test martingales.
Nevertheless, the setting in \cite{RRLK20} is different from ours;
for instance, the structure of e-processes for composite hypotheses
is more complicated than that for simple hypotheses,
whereas the results on se-merging functions in our paper
do not depend on whether we consider simple or composite hypotheses.

Combining Theorems \ref{thm:inverse} and \ref{th:2},
an admissible se-merging function must be the last component
of a  sequence of adapted, anytime valid, and precise functions.

\section{Merging independent e-values}
\label{sec:independent}

In this section, we will study the more delicate situation of merging independent e-values.
Since independent e-variables are sequential e-variables,
the martingale merging methods of Section \ref{sec:sequential} are valid in this situation.
An interesting question is whether many more merging functions are allowed for independent e-values. 

We first explain the intuition and then give formal definitions.
An important observation is that, in the case where the e-values to be merged are independent,
one may process them in any arbitrary order,
instead of the fixed order for sequential e-values.
Let us imagine that the $K$ e-values are written on cards
which initially are lying face down so that their values are not shown.
The statistician reveals these cards one by one
and bets on each card right before revealing it.
In the case of sequential e-values,
the order of revealing these cards is fixed (from card $1$ to card $K$),
and the relative bet on card $k+1$ (which has value $e_{k+1}$ on it)
is $s(\mathbf{e}_{(k)})$ in \eqref{eq:S}.

If the e-values are independent,
the statistician can decide the order of revealing these e-values.
Moreover, he can apply an adaptive strategy for turning over the cards,
that is, at each step, revealing a card based on what he has seen on the previous cards
(this picture goes back to Kolmogorov \cite[Section~2]{Kolmogorov:1963}).
As in the case of constructing martingale merging functions,
the statistician also needs to decide  the amount of bet at each step.
Therefore, the strategy involves two decision variables,
which e-value to reveal next, denoted by $\pi_{k+1}$,
and how much to bet on this e-value, denoted by $s(\mathbf{e}^\pi_{(k)})$,
where $\mathbf{e}^\pi_{(k)}:=(e_{\pi_1},\dots,e_{\pi_k})$
and again $\mathbf{e}^\pi_{(0)}:=\Box$.
Although not explicitly reflected in the notation,
one should keep in mind that $\pi_{k+1}$ is a function of $\mathbf{e}^\pi_{(k)}$.
We write $\mathbf{e}^\pi := (e_{\pi_1},\dots,e_{\pi_K})$,
which is the vector of e-values reordered by~$\pi$.

Certainly, an average of ie-merging functions obtained by this procedure
with different orders or different strategies
is still an ie-merging function.
This corresponds to a randomized betting strategy for the statistician:
at each step, he can generate $\pi_{k+1}$ and $s(\mathbf{e}^\pi_{(k)})$
using some distributions that depend on the past observations and decisions. 

We will make this procedure rigorous below.
The order of revealing the e-values is modelled by $\pi=(\pi_k)_{k\in[K]}$,
where $\pi_k:[0,\infty)^{k-1}\to[K]$ are measurable functions,
and for any $\mathbf{e}\in [0,\infty)^K$,
$\pi_k(\mathbf{e}^\pi_{(k-1)}) \ne \pi_j(\mathbf{e}^\pi_{(j-1)})$ for distinct $j,k\in[K]$,
meaning that you can only read the same e-value once.
Equivalently, $\{\pi_k(\mathbf{e}_{(k-1)}^\pi):k\in [K]\}=[K]$.
Such $\pi$ is called a \emph{reading strategy}.
 
For a given gambling system  $s:[0,\infty)^{<K}\to[0,1]$,
a reading strategy $\pi=(\pi_1,\dots,\pi_K)$,
and $c\in[0,1]$,
the corresponding \emph{reordered game martingale}
is the sequence of measurable functions $S^{s,\pi}_k:[0,\infty)^K\to[0,\infty)$,
$k=0,\dots,K$, given by $S_0^{s,\pi}:=c$ and
\begin{align}\label{eq:S-reordered}
  S^{s,\pi}_{k+1}(\mathbf{e})
  :=
  S^{s,\pi}_k(\mathbf{e})  
  \left(
    s(\mathbf{e}^\pi_{(k)}) e_{\pi_{k+1} }
    +
    1-s(\mathbf{e}^\pi_{(k)})
  \right), 
  \quad
  k=0,\dots,K-1,
\end{align}
where  we omit the argument in $\pi_{k+1} =\pi_{k+1}( \mathbf{e}^\pi_{(k)})$.
An equivalent way to write \eqref{eq:S-reordered} is 
\[
  S_{k}^{s,\pi}(\mathbf{e})
  =
  S_k (\mathbf{e}^\pi),
  \qquad
  k=0,1,\dots,K,
\]
where $(S_k)_{k=0,1,\dots,K}$ is the game martingale in \eqref{eq:S} associated with $s$.

A \emph{generalized martingale merging function} $F$
is a mixture (average) of $S_K^{s,\pi}$ above, that is, 
\begin{equation}\label{eq:F}
  F(\mathbf{e})
  =
  \E^{\mu} [S^{s,\pi} _K(\mathbf{e})]
  = 
  \int S^{s,\pi} _K(\mathbf{e})
  \mu(\d s,\d\pi)
  \text{\qquad for $\mathbf{e}\in[0,\infty)^K$},
\end{equation}
for some probability measure $\mu$ on the pairs $(s,\pi)$. 
Note that $s$ and $\pi$ are not independent in general.
 
The generalized martingale merging functions form a subclass of the ie-merging functions,
as the following theorem shows.

\begin{theorem}\label{th:3}
  Any generalized martingale merging function is an ie-merging function.
\end{theorem}

Theorem \ref{th:3} essentially follows from the following lemma.
For a proof of Theorem \ref{th:3}, we only need the first statement of the lemma.

\begin{lemma}\label{lem:reading-st}
  Let $E_1,\dots,E_K$ be independent e-variables, and $\pi$ be a reading strategy.
  Recursively define $E^\pi_k= E_{\pi_{k}(E_1^\pi,\dots,E_{k-1}^\pi)}$ for $k\in [K]$.
  Then $E_1^\pi,\dots,E_{K}^\pi$ are sequential e-variables.
  If $E_1,\dots,E_K$ are iid, then so are $E_1^\pi,\dots,E_{K}^\pi$.
\end{lemma}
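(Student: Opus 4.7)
The plan is to handle the adaptivity by partitioning the sample space according to the realized prefix of the reading order. The source of the difficulty is that the index $\pi_k$ of the $k$-th card is random (a Borel function of the past), so the $\sigma$-algebra $\sigma(E_1^\pi,\dots,E_{k-1}^\pi)$ is ``generated by different subsets of $E_1,\dots,E_K$ on different sample points'', and one cannot blindly condition on it and invoke independence. I would sidestep this by conditioning instead on the richer data of which particular indices $\pi$ has chosen so far, which puts us back in the setting of the original independent $E_i$'s.

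Concretely, fix $k\in[K]$ and, for each sequence $(i_1,\dots,i_{k-1})$ of distinct indices in $[K]$, set
\[
  A_{i_1,\dots,i_{k-1}}
  :=
  \{\pi_1=i_1,\dots,\pi_{k-1}=i_{k-1}\}.
\]
From the recursion $E_j^\pi=E_{\pi_j}$, the event $A_{i_1,\dots,i_{k-1}}$ is $\sigma(E_{i_1},\dots,E_{i_{k-1}})$-measurable, and on it $\pi_k = g(E_{i_1},\dots,E_{i_{k-1}})$ for a Borel function $g$ taking values in $[K]\setminus\{i_1,\dots,i_{k-1}\}$ (the distinctness requirement on $\pi$ forces $g$ to avoid the already-used indices). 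These events partition $\Omega$.

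For sequentiality, I would take any nonnegative bounded Borel $\phi$ and further split $\E[\phi(E_1^\pi,\dots,E_{k-1}^\pi)\, E_k^\pi\, \id_{A_{i_1,\dots,i_{k-1}}}]$ by the value $j=g(E_{i_1},\dots,E_{i_{k-1}})$. On $A_{i_1,\dots,i_{k-1}}\cap\{g=j\}$, which is $\sigma(E_{i_1},\dots,E_{i_{k-1}})$-measurable, we have $E_k^\pi=E_j$ with $j\notin\{i_1,\dots,i_{k-1}\}$, so by independence of $E_j$ from $(E_{i_1},\dots,E_{i_{k-1}})$ and $\E[E_j]\le 1$, the summand is bounded by $\E[\phi(E_{i_1},\dots,E_{i_{k-1}})\,\id_{A_{i_1,\dots,i_{k-1}}\cap\{g=j\}}]$. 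Summing over $j$ and then over all prefixes recollapses the partition and yields $\E[\phi(E_1^\pi,\dots,E_{k-1}^\pi)\, E_k^\pi] \le \E[\phi(E_1^\pi,\dots,E_{k-1}^\pi)]$, which is exactly $\E[E_k^\pi\mid E_1^\pi,\dots,E_{k-1}^\pi]\le 1$.

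For the iid case with common law $\nu$, I would rerun the same decomposition with a generic bounded Borel $h$ in place of the identity. Because every $E_j$ now has law $\nu$, the inequality $\E[E_j]\le1$ is replaced by the exact identity $\E[h(E_j)]=\int h\,\d\nu$, so the partition sum telescopes to $\E[\phi(E_1^\pi,\dots,E_{k-1}^\pi)\,h(E_k^\pi)] = \E[\phi(E_1^\pi,\dots,E_{k-1}^\pi)]\int h\,\d\nu$. This shows $E_k^\pi$ is independent of $(E_1^\pi,\dots,E_{k-1}^\pi)$ with law $\nu$, and an induction on $k$ delivers the iid conclusion. The main obstacle is purely the measure-theoretic bookkeeping of setting up the prefix partition; once that is in place, everything reduces to a standard Fubini/independence application summand-by-summand.
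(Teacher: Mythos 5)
Your proof is correct and follows essentially the same route as the paper's: both decompose the conditional expectation of $E_k^\pi$ according to which index $j$ is revealed next and use that $E_j$ is independent of the already-revealed values whenever $j$ is still available, with $\E[E_j]\le 1$ giving sequentiality and the exact identity $\E[h(E_j)]=\int h\,\d\nu$ giving the iid conclusion. Your explicit partition by the realized index prefix is just a more careful bookkeeping of the step the paper states as ``for $j\in A^\pi_k$, $E_j$ is independent of $\mathbf E^\pi_{(k)}$,'' so there is no substantive difference.
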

\begin{proof}
  Set
  \begin{align*}
    \mathbf{E}^\pi_{(k)}
    &=
    (E_1^\pi,\dots,E_k^\pi)
    \text{ for $k\in\{0,1,\dots,K\}$}\\
    A^\pi_k
    &=
    [K]\setminus\{\pi_1(\mathbf{E}^\pi_{(0)}),\pi_2(\mathbf{E}^\pi_{(1)}),\dots,\pi_{k}(\mathbf{E}^\pi_{(k-1)})\}
    \text{ for $k\in [K]$};
  \end{align*}
  the latter is the set of all possible values that $\pi_{k+1}(\mathbf{E}^\pi_{(k)})$ can take.
  Let $f:\R\to\R$ be a Borel function.
  For $k=1,\dots,K-1$,
  \begin{align*}
    \E
    \left[
      f(E^\pi_{k+1}) \mid \mathbf{E}^\pi_{(k)}
    \right]
    &=
    \sum_{j\in[K]}
    \E
    \left[
      f(E_{j}) \id_{\{j=\pi_{k+1}(\mathbf{E}^\pi_{(k)})\}} \mid \mathbf{E}^\pi_{(k)}
    \right]
    \\
    &=
    \sum_{j\in  A^\pi_k} \id_{\{j=\pi_{k+1}(\mathbf{E}^\pi_{(k)})\}}
    \E
    \left[
      f(E_{j})\mid\mathbf{E}^\pi_{(k)}
    \right]. 
  \end{align*} 
  Using the independence between $E_j$ and $(E_k)_{k\ne j}$,
  for any fixed $j\in[K]$,
  \begin{align*}
    \E
    \left[
      f(E_{j})\mid  \mathbf{E}^\pi_{(k)}
    \right]
    =
    \id_{\{j \in A^\pi_k \}} \E\left[f(E_{j})\right]
    +
    \id_{\{j\notin A^\pi_k\}}
    f(E_j).
  \end{align*}
  Therefore,
  \begin{align}
    \E
    \left[
      f(E^\pi_{k+1}) \mid \mathbf{E}^\pi_{(k)}
    \right]
    =
    \sum_{j\in A^\pi_k} \id_{\{j=\pi_{k+1}(\mathbf{E}^\pi_{(k)})\}}
    \E
    \left[
      f(E_{j})
    \right].
    \label{eq:proof-lem3}
  \end{align}    
  Taking $f$ as the identity in \eqref{eq:proof-lem3}, we get 
  \[
    \E
    \left[
      E^\pi_{k+1} \mid \mathbf{E}^\pi_{(k)}
    \right]
    \le
    \sum_{j\in A^\pi_k} \id_{\{j=\pi_{k+1}(\mathbf{E}^\pi_{(k)})\}}
    =
    1.
  \] 
  This shows that  $E_1^\pi,\dots,E_{K}^\pi$ are sequential e-variables.  
 
  If $E_1,\dots,E_K$ are iid, then \eqref{eq:proof-lem3} yields
  $\E  [ f(E^\pi_{k+1}) \mid \mathbf{E}^\pi_{(k)}]=\E[f(E_1)]$
  and hence $\E[f(E^\pi_{k+1})]=\E[f(E_1)] $ for any Borel $f$.
  This shows that $E_{k+1}^\pi$ is independent of $E_1^\pi,\dots,E_k^\pi$ and identically distributed as $E_1$.
  Hence, $E_1^\pi,\dots,E_K^\pi$  are iid. 
\end{proof}

In the first statement of Lemma \ref{lem:reading-st},
$E^\pi_1,\dots,E^\pi_K$  are sequential e-variables but not necessarily independent.
For instance,
consider a setting where $E_1$ and $E_2$ are uniform on $[0,2]$ and $E_3=1$.
The strategy $\pi$ is specified by $\pi_1=1$, $\pi_2(e)= 2\id_{\{e<1\}} + 3\id_{\{e\ge 1\}}$.
It is clear that $E^\pi_1$ and $E^\pi_2$ are not independent.

\begin{proof}[Proof of Theorem \ref{th:3}]
  Let $\mathbf{E}=(E_1,\dots,E_K)$ be a vector of independent e-variables.
  It suffices to consider $S^{s,\pi}_K(\mathbf{E})$ in \eqref{eq:S-reordered}
  for a deterministic pair $(s,\pi)$,
  because a generalized martingale merging function is a mixture of $S_K^{s,\pi}$,
  and taking a mixture of e-variables yields an e-variable.
  Note that $S_K^{s,\pi}(\mathbf{E})=S_K(\mathbf{E}^\pi)$,
  where $\mathbf{E}^\pi=(E_1^\pi,\dots,E_K^\pi)$ in Lemma \ref{lem:reading-st}.
  Hence, $\E[S^{s,\pi}_K(\mathbf{E})]\le 1$ follows from Lemmas~\ref{lem:mart_is_se} and~\ref{lem:reading-st}. 
\end{proof}

The following is an example of a generalized martingale merging function
that is not an se-merging function.

\begin{example}
  The following function is taken from \cite[Remark 4.3]{VW21},
  where it is shown that
  \begin{equation}\label{eq:example}
    F(e_1,e_2)
    :=
    \frac12
    \left(
      \frac{e_1}{1 + e_1}
      +
      \frac{e_2}{1 + e_2}
    \right)
    \left(
      1 + e_1 e_2
    \right)
  \end{equation}
  is an ie-merging function.
  Let us check that $F$ is also a generalized martingale merging function.
  Notice that the symmetric expression \eqref{eq:example}
  can be represented as the arithmetic average of
  \begin{equation*}
    \frac{e_1}{1 + e_1}
    \left(
      1 + e_1 e_2
    \right)
    =
    e_1
    \left(
      \frac{1}{1 + e_1}
      +
      \frac{e_1}{1 + e_1}
      e_2
    \right)
  \end{equation*}
  and the analogous expression with $e_1$ and $e_2$ interchanged.
  The ``generalized gambling system'' producing \eqref{eq:example}
  starts from uncovering $e_1$ or $e_2$ with equal probabilities
  and investing all the capital in the chosen e-variable.
  If $e_1$ is uncovered first,
  it then invests a fraction of $e_1/(1+e_1)$ of its current capital into $e_2$.
  And if $e_2$ is uncovered first,
  it invests a fraction of $e_2/(1+e_2)$ of its current capital into $e_1$.

  Let us now check that $F$ is not an se-merging function.
  By the symmetry of $F$, we can assume, without loss of generality,
  that we first observe the e-variable $E_1$ producing $e_1$
  and then observe $E_2$ producing $e_2$.
  Had $F$ been an se-merging function,
  \[
    G(e_1)
    :=
    \sup_{E_2\in\EEE}\E [F(e_1,E_2)]
  \]
  would have produced an e-variable when plugging in $e_1=E_1$.
  Let $E_2$ be given by $p^{-1}\id_{A}$ where $\P(A)=p$ for some $p\in (0,1]$; 
  later $p$ will be chosen depending on $e_1$.
  We can compute
  \begin{align*}
    \E[F(e_1,E_2)]
    &=
    (1-p)\frac{1}{2}
    \frac{e_1}{1+e_1}
    +
    p\frac{1}{2}
    \left(\frac{e_1}{1+e_1} + \frac{p^{-1}}{1+p^{-1}} \right)
    \left (1+e_1p^{-1}\right) \\
    &=
    \frac{1}{2}
    \left(
      e_1 + \frac{p+e_1}{p+1}
    \right).
  \end{align*}
  Letting $p\downarrow 0$ we have $\E [F(e_1,E_2)]\to e_1$,
  and letting $p:=1$ we have $\E[F(e_1,E_2)] = (3e_1+1)/4$.
  Therefore,
  \begin{equation*}
    G(e_1)
    =
    \sup_{E_2\in\EEE}\E[ F(e_1,E_2)]
    \ge
    \max
    \left(
      e_1,\frac{3e_1+1}{4}
    \right),
  \end{equation*} 
  Note that if $\E[E_1]=1$, then $\E[ G(E_1)]>1$
  unless $E_1=1$ with probability $1$.
  Hence, $F$ is not an se-merging function.
\end{example}

\begin{example}
  In the case $K=2$, the reading strategy $\pi$ is specified by $\pi_1$,
  which does not depend on any observed e-values.
  Hence, the bet $s$ can be chosen separately
  on the events $\{\pi_1=1\}$ and $\{\pi_1=2\}$.
  Therefore, we can write all generalized martingale merging functions in \eqref{eq:F} as
  \begin{equation}\label{eq:decompose}
    (e_1,e_2)
    \mapsto
    \beta (1 + a_1 \tilde e_1) (1 + g_1 (e_1) \tilde e_2)
    +
    (1-\beta) (1+ a_2 \tilde e_2) (1+ g_2 (e_2) \tilde e_1), 
  \end{equation}
  where $\tilde e_1=e_1-1$, $\tilde e_2=e_2-1$, $\beta,a_1,a_2\in [0,1]$,
  and $g_1,g_2:[0,\infty)\to[0,1]$ are Borel functions.
  To interpret \eqref{eq:decompose} as a randomized reordered game martingale,
  $\beta $ is the probability of $\pi_1=1$,
  $a_1,a_2$ are the first-round bets, and $g_1,g_2 $ are the second-round bets.
\end{example}

In view of Theorem \ref{thm:inverse},
one may wonder whether any ie-merging function is dominated
by a generalized martingale merging function.
The answer is, somewhat surprisingly, no,
as the following counter-example shows.\footnote%
  {This counter-example is due to Zhenyuan Zhang, to whom we are grateful.\label{f:ZZ}}
\begin{example}\label{ex:ct}
  Fix a constant $c>1$ and define the function $G:[0,\infty)^2\to\R$ by
  \[
    G(\mathbf{e})=\id_{[0,c)^2 }(\mathbf{e})+(2c-1) \id_{[c,\infty)^2}(\mathbf{e}).
  \]
  Take any two independent e-variables $E_1,E_2$,
  and set $a:=\P(E_1\ge c)$ and $b:=\P(E_2\ge c)$.
  Markov's inequality gives $a,b\le1/c$,
  which implies $1/a+1/b\ge2c$, and thus $a+b\ge2cab$.
  We can verify
  \begin{align*}
    \E[G(E_1,E_2)]&=\P(E_1 < c)\P(E_2< c)+(2c-1)\P(E_1\ge c)\P(E_2\ge c)\\
    &=1-a-b+2cab\le 1.
  \end{align*} 
  Hence, $G$ is an ie-merging function.  
  Let us show that $G$ is not dominated by any generalized martingale merging function $F$.
  Suppose otherwise.
  We can assume that $F$ has the form \eqref{eq:decompose}.
  Since $F \ge G$, we know that $F=1$ on $[0,1]^2$,
  which implies that $a_1=a_2=0$
  (unless $\beta\in\{0,1\}$,
  in which case assuming $a_1=a_2=0$ does not lead to loss of generality).
  This gives $F(c,c) \le \beta c+(1-\beta)c = c$. 
  However, $G(c,c)=2c-1$ whereas $F(c,c)\le c<2c-1$, a contradiction.
\end{example}

\section{Conclusion}
\label{sec:conclusion}

For sequential e-variables,
full characterizations of admissible merging functions and methods for constructing e-processes
are obtained in this paper.
For independent e-variables,
we propose the class of generalized martingale merging functions,
but a full picture remains unclear.

Regarding the merging functions for independent or sequential e-values,
there are several open questions.
\begin{enumerate}
\item
  It is unclear in which practical settings
  constructing independent e-values performs better than constructing sequential but dependent e-values. 
  Since independent e-values are more restrictive to construct
  (cf.\ the example of likelihood ratios in Section \ref{sec:example}), 
  it would only be valuable to construct them
  if they carry more statistical power in some situations. 
\item
  It would be interesting to find simple, and perhaps practically useful,
  ie-merging functions that are not dominated by a generalized martingale merging function.
\item 
  It remains unclear whether any increasing ie-merging function
  is dominated by a generalized martingale merging function;
  note that the function $G$ in Example \ref{ex:ct} is not increasing. 
\item
  We may further require an ie-merging function to be precise;
  that is, for independent e-variables with mean $1$, the function produces an e-variable with mean $1$.
  Is every ie-merging function with this requirement necessarily a generalized martingale merging function?
\item
  Instead of sequential e-variables, 
  we can consider a general filtration $\mathcal F = (\mathcal F_k)_{k\in\{0,\dots,K\}}$
  and a vector $\mathbf{E}=(E_k)_{k\in [K]}$ of adapted e-variables
  satisfying $\E[E_k|\mathcal F_{k-1}]\le 1$ for each $k$.
  By using the tower property,  $E_1,\dots,E_K$ are sequential e-variables. 
  Therefore,  all se-merging functions are valid merging functions
  for all such pairs $(\mathbf E,\mathcal F)$. 
  It may be possible to obtain a larger class of merging functions
  that are valid for all pairs $(\mathbf E,\mathcal F)$ satisfying some further conditions;
  the class of se-merging functions corresponds to the setting
  where $\mathcal F$ is the natural filtration of $E_1,\dots,E_K$.
  (If we relax the assumption that $\mathbf{E}$ is adapted to $\mathcal F$,
  then by considering all pairs $(\mathbf E,\mathcal F)$, 
  we arrive at the class of all e-merging functions.)
\end{enumerate}

\subsection*{Acknowledgments}

We are grateful to the anonymous referees of the journal version of this paper
(to appear in the \emph{Electronic Journal of Statistics})
for helpful comments.
For the role of Zhenyuan Zhang, see footnote~\ref{f:ZZ}.

The first author was supported by Amazon, Stena Line, and Mitie.
The second author was supported in part by grants CRC-2022-00141 and RGPIN-2018-03823
from the Natural Sciences and Engineering Research Council of Canada.

\end{document}